\newcommand{\field}[1]{\mathbb{#1}}
\newcommand{\Z}{\field{Z}}
\newcommand{\design}{{\mathcal D}}
\newcommand{\fraction}{{\mathcal F}}
\DeclareMathOperator{\xRe}{Re}
\DeclareMathOperator{\somma}{Su}
\begin{document}

\title*{Unions of Orthogonal Arrays and their aberrations via {H}ilbert bases}
% Use \titlerunning{Short Title} for an abbreviated version of
% your contribution title if the original one is too long
\author{Roberto Fontana and Fabio Rapallo}
% Use \authorrunning{Short Title} for an abbreviated version of
% your contribution title if the original one is too long
\institute{Roberto Fontana \at Department of Mathematical Sciences, Politecnico di Torino, corso Duca degli Abruzzi 24, 10124 Torino, Italy, \email{roberto.fontana@polito.it}
\and Fabio Rapallo \at Department of Sciences and Technological Innovation, Universit\`a del Piemonte Orientale, viale Teresa Michel 11, 15121 Alessandria, Italy, \email{fabio.rapallo@uniupo.it}}
%
% Use the package "url.sty" to avoid
% problems with special characters
% used in your e-mail or web address
%
\maketitle

\abstract*{[TO DO]}

\abstract{We generate all the Orthogonal Arrays (OAs) of a given size $n$ and strength $t$ as the union of a collection of OAs which belong to an inclusion-minimal set of OAs. We derive a formula for computing the (Generalized) Word Length Pattern of a union of OAs that makes use of their polynomial counting functions. In this way the best OAs according to the Generalized Minimum Aberration criterion can be found by simply exploring a relatively small set of counting functions. The classes of OAs with $5$ binary factors, strength $2$, and sizes $16$ and $20$ are fully described.}

\section{Introduction}
\label{sec:1}

Design of Experiments plays a central role in several fields of applied Statistics, from Biology to Engineering, from Computer Science to Economics. The need of efficient experimental designs has led to the definition of several criteria for the choice of the design points. All such criteria aim to produce the best estimates of the relevant parameters for a given sample size. Here we limit our attention to fractional factorial designs together with the Generalized Minimum Aberration (GMA) criterion.

In the framework of factorial experiments, Generalized Word-Length Pattern (GWLP) is an important tool for
comparing fractional factorial designs. First introduced for regular fractions, GWLP has been generalized for non-regular multilevel designs by Xu and Wu \cite{xu|wu:01}. Since the GWLP does not depend on the coding of the factor levels, Pistone
and Rogantin \cite{pistone|rogantin:08} use the complex coding of the factor levels to express the basis of the
polynomial complex functions over a design, and in particular of the counting function. Using this coding, the coefficients of the
counting function are closely related with the aberrations and the GWLP. Moreover, the coefficients of the counting function can be expressed in terms of the counts of the levels appearing in each simple or interaction term. As general references for
GWLP and its properties, the reader can refer to, e.g., \cite{hedayat2012orthogonal}, \cite{dey2009fractional} and \cite{mukerjee2007modern}.

In practice, the GWLP is used to discriminate among different designs through the Generalized Minimum Aberration (GMA) criterion: given two designs ${\mathcal F}_1$ and ${\mathcal F}_2$ with $m$ factors, the corresponding GWLPs are two vectors
\[
A_{\fraction_i}=\left(A_0(\fraction_i)=1, A_1(\fraction_i), \ldots , A_m(\fraction_i)\right) \qquad i=1,2 \, .
\]
The GMA criterion consists in the sequential minimization of such GWLPs: $\fraction_1$ is better than $\fraction_2$ if there exists $j$ such that $A_0(\fraction_1) = A_0(\fraction_2), \ldots, A_j(\fraction_1) = A_j(\fraction_2)$ and $A_{j+1}(\fraction_1) < A_{j+1}(\fraction_2)$. The GMA criterion is usually applied to Orthogonal Arrays (OA), see \cite{hedayat2012orthogonal}.

In this work we use results from Combinatorics and Algebraic Geometry to ease the computation of the GWLP. The connection between the GWLP and the geometric structure of the design points is studied in \cite{fontana|rapallo|rogantin:16}, but we adopt here a different point of view. In particular, we show that the set of all OAs with given strength form are the points with integer entries of a cone defined through linear constraints. This allows us to write each OA as the union of elements of the Hilbert basis of the cone. Moreover, we show that the GWLP of the union of two or more fractions can be computed from the counting functions of such fractions. The computation of the Hilbert basis is done through combinatorial algorithms and its complexity increases fast with the number of factors and the number of factor levels. Thus, we illustrate explicit computations for relatively small designs. Nevertheless, the theory presented here can have also a theoretical interest and may be the basis of further developments.

%The paper is organized as follows. [TO DO]

\section{Fractions, counting functions and aberration}
\label{sec:alg}

In this section, for ease in reference, we present some relevant results of the algebraic theory of Orthogonal Fractional Factorial Designs and we express the aberration of fractional designs using the coefficients of the polynomial counting function. This presentation is based on \cite{fontana2017generalized}. The interested reader can find further information, including the proofs of the propositions, in \cite{fontana2000classification} and \cite{pistone2008indicator}.

\subsection{Fractions of a full factorial design} \label{sec:fr_ff}
Let us consider an experiment which includes $m$ factors $\design_{j}, \; j=1,\ldots,m$. Let us code the $s_j$ levels of the factor $\design_{j}$ by  the $s_j$-th roots of the unity
\[
\design_{j} = \{\omega_0^{(s_j)},\ldots,\omega_{s_j-1}^{(s_j)}\} \, ,
\]
where $\omega_k^{(s_j)}=\exp\left(\sqrt{-1}\:  \frac {2\pi}{s_j} \ k\right)$, $k=0,\ldots,s_j-1, \ j=1,\ldots,m$.

The \emph{full factorial design} with complex coding is $\design = \design_1 \times \cdots \design_j \cdots \times \design_m$. We denote its cardinality by $\# \design$, $\# \design=\prod_{j=1}^m s_j$.

\begin{definition}
A \emph{fraction} $\fraction$ is a multiset $(\fraction_*,f_*)$ whose underlying set of elements $\fraction_*$ is contained in $\design$ and $f_*$ is the multiplicity function $f_*: \fraction_* \rightarrow \mathbb N$ that for each element in $\fraction_*$ gives the number of times it belongs to the multiset $\fraction$.
\end{definition}
We recall that the underlying set of elements $\fraction_*$ is the subset of $\design$ that contains all the elements of $\design$ that appear in $\fraction$ at least once. We denote the number of elements of a fraction $\fraction$ by $\# \fraction$, with  $\# \fraction= \sum_{\zeta \in \fraction_*} f_*(\zeta)$.

In order to use polynomials to represent all the functions defined over $\design$, including multiplicity functions, we define
\begin{itemize}
\item $X_j$, the $j$-th component function, which maps a point $\zeta=(\zeta_1,\ldots,\zeta_m)$ of $\design$ to its $j$-th component,
\[
X_j \colon \design \ni (\zeta_1,\ldots,\zeta_m)\ \longmapsto \ \zeta_j \in \design_j \, .
\]
The function $X_j$ is a \emph{simple term} or, by abuse of terminology, a \emph{factor}.
\item $X^\alpha=X_1^{\alpha_1} \cdot \ldots \cdot X_m^{\alpha_m}$, $\alpha \in L =  \Z_{s_1} \times \cdots \times  \Z_{s_m}$ i.e., the monomial function
\[
X^\alpha : \design \ni (\zeta_1,\ldots,\zeta_m)\ \mapsto \ \zeta_1^{\alpha_1}\cdot \ldots \cdot \zeta_m^{\alpha_m} \, .
\]
The function $X^\alpha$ is an \emph{interaction term}.
\end{itemize}

We observe that $\{X^\alpha: \alpha \in L =  \Z_{s_1} \times \cdots \times  \Z_{s_m}\}$ is a basis of all the complex functions defined over $\design$. We use this basis to represent the counting function of a fraction according to the following definition.

\begin{definition} \label{de:indicator}

The \emph{counting function} $R$ of a fraction $\fraction$ is a complex polynomial defined over $\design$ so that for each $\zeta \in \design$, $R(\zeta)$ equals the number of appearances of $\zeta$ in the fraction. A $0-1$ valued counting function is called an \emph{indicator function} of a single-replicate fraction $\fraction$.
We denote by $c_\alpha$ the coefficients of the representation of $R$  on $\design$ using the monomial basis
$\{X^\alpha, \ \alpha \in L\}$:
\[
R(\zeta) = \sum_{\alpha \in L} c_\alpha X^\alpha(\zeta), \;\zeta\in\design, \;  c_\alpha \in \mathbb C \, .
\]
\end{definition}

%\begin{remark}
%Vector orthogonality is defined with respect to the Hermitian product defined as
%$$
%f\cdot g = E_\fraction(f \ \overline g) \equiv \frac{1}{\#\fraction} \sum_{\zeta \in \fraction} f(\zeta)\overline{g(\zeta)} \ ,
%$$
%where $\overline g$ is the complex conjugate of $g$. It should be noted that $\sum_{\zeta \in \fraction} f(\zeta)$ means $\sum_{\zeta \in \fraction_*} f_*(\zeta) f(\zeta)$.
%
%With the complex coding, over a generic $\fraction$, the vector orthogonality of two interaction terms, $X^\alpha$ and $X^\beta$ corresponds to the combinatorial orthogonality of the corresponding multisets $\{X^\alpha(\zeta): \zeta \in \fraction\}$ and $\{X^\beta(\zeta): \zeta \in \fraction\}$, i.e. each point of their Cartesian product
%appears equally often.
%\end{remark}

With Prop.~\ref{pr:bc-alpha} from \cite{pistone2008indicator}, we link the orthogonality of two interaction terms with the coefficients of the polynomial representation of the counting function. We denote by $\overline{z}$ the complex conjugate of the complex number $z$.

\begin{proposition} \label{pr:bc-alpha}
If $\fraction$ is a fraction of a full factorial design $\design$, $R = \sum_{\alpha \in L} c_\alpha X^\alpha$ is its counting function and $[\alpha-\beta]$ is the $m$-tuple made by the componentwise difference in the rings $\Z_{s_j}$,
 $\left(\left[\alpha_1-\beta_1 \right]_{s_1}, \ldots, \left[\alpha_m - \beta_m\right]_{s_m} \right)$, then
\begin{enumerate}
 \item \label{it:balpha}
the coefficients $c_\alpha$ are given by $c_\alpha= \frac 1 {\#\design} \sum_{\zeta \in \fraction} \overline{X^\alpha(\zeta)}$;
 \item \label{it:cent1}
 the term $X^\alpha$ is centered on $\fraction$ i.e., $\frac{1}{\#\fraction} \sum_{\zeta \in \fraction} X^\alpha(\zeta)=0$ if, and only if,
 $c_\alpha=c_{[-\alpha]}=0 $;
 \item \label{it:cent2}
 the terms $X^\alpha$ and $X^\beta$
are orthogonal on $\fraction$ if and only if,
$c_{[\alpha-\beta]}=0 $.
\end{enumerate}
\end{proposition}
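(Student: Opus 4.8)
The backbone of the argument is the orthogonality of the monomial basis $\{X^\alpha : \alpha \in L\}$ on the full factorial design $\design$ with respect to the Hermitian pairing $\langle f,g\rangle = \sum_{\zeta\in\design} f(\zeta)\,\overline{g(\zeta)}$. The plan is to first record three elementary facts coming from the factor levels being roots of unity: for every $\zeta\in\design$ one has $\overline{X^\alpha(\zeta)} = X^{[-\alpha]}(\zeta)$ and $X^\alpha(\zeta)\,X^\beta(\zeta) = X^{[\alpha+\beta]}(\zeta)$ (because $\zeta_j^{s_j}=1$ on $\design$), and $\sum_{\zeta\in\design} X^\alpha(\zeta) = \prod_{j=1}^m \sum_{k=0}^{s_j-1}\left(\omega_k^{(s_j)}\right)^{\alpha_j}$, which equals $\#\design$ when $\alpha=[0]$ and $0$ otherwise, each inner sum being a geometric sum of $s_j$-th roots of unity. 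Combining the first and the third gives $\langle X^\alpha, X^\beta\rangle = \#\design\,\delta_{\alpha\beta}$.

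For item~\ref{it:balpha}, I would multiply the representation $R=\sum_{\beta}c_\beta X^\beta$ by $\overline{X^\alpha}$ and sum over $\design$. The left-hand side becomes $\sum_{\zeta\in\design} R(\zeta)\,\overline{X^\alpha(\zeta)} = \sum_{\zeta\in\fraction}\overline{X^\alpha(\zeta)}$, since $R(\zeta)$ is by definition the multiplicity of $\zeta$ in $\fraction$ and the sum over $\design$ therefore collapses to a sum over $\fraction$; the right-hand side equals $\sum_\beta c_\beta\,\langle X^\beta, X^\alpha\rangle = c_\alpha\,\#\design$ by orthogonality. Dividing by $\#\design$ yields the formula.

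For item~\ref{it:cent1}, I would apply item~\ref{it:balpha} to the index $[-\alpha]$ and use $\overline{X^{[-\alpha]}(\zeta)} = X^\alpha(\zeta)$ on $\design$ to get $c_{[-\alpha]} = \frac{1}{\#\design}\sum_{\zeta\in\fraction}X^\alpha(\zeta)$; conjugating item~\ref{it:balpha} itself gives $\overline{c_\alpha} = \frac{1}{\#\design}\sum_{\zeta\in\fraction}X^\alpha(\zeta)$, so $c_\alpha$ and $c_{[-\alpha]}$ are complex conjugates (a fact that also follows from $R$ being real-valued), and either one vanishes exactly when $\sum_{\zeta\in\fraction}X^\alpha(\zeta)=0$, i.e.\ when $X^\alpha$ is centered on $\fraction$. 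For item~\ref{it:cent2}, I would note that on $\design$, hence on $\fraction\subseteq\design$, $X^\alpha(\zeta)\,\overline{X^\beta(\zeta)} = X^\alpha(\zeta)\,X^{[-\beta]}(\zeta) = X^{[\alpha-\beta]}(\zeta)$, so $X^\alpha$ and $X^\beta$ are orthogonal on $\fraction$ precisely when $\sum_{\zeta\in\fraction}X^{[\alpha-\beta]}(\zeta)=0$, which by item~\ref{it:cent1} applied to the single term $X^{[\alpha-\beta]}$ is precisely the condition $c_{[\alpha-\beta]}=0$.

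The only genuinely delicate point is keeping the modular arithmetic straight: the identities $\overline{X^\alpha}=X^{[-\alpha]}$ and $X^\alpha X^\beta = X^{[\alpha+\beta]}$ hold as functions on $\design$ but not as polynomial identities, so every manipulation must be read as taking place after evaluation at points of $\design$ (where $\zeta_j^{s_j}=1$). Once this is internalized, all three items are short consequences of the single orthogonality relation $\langle X^\alpha, X^\beta\rangle = \#\design\,\delta_{\alpha\beta}$, and I expect no substantial obstacle beyond this bookkeeping and the routine geometric-sum evaluation of $\sum_{\zeta\in\design}X^\alpha(\zeta)$.
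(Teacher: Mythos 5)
Your proof is correct: the character-orthogonality relation $\sum_{\zeta\in\design}X^\beta(\zeta)\overline{X^\alpha(\zeta)}=\#\design\,\delta_{\alpha\beta}$, the coefficient extraction for item~1, the conjugation identity $c_{[-\alpha]}=\overline{c_\alpha}$ for item~2, and the reduction of item~3 to item~2 via $X^\alpha\overline{X^\beta}=X^{[\alpha-\beta]}$ on $\design$ are all sound, including the bookkeeping caveat that these identities hold only after evaluation on $\design$. Note that the paper itself gives no proof of this proposition (it is quoted from Pistone and Rogantin's work on indicator functions and complex coding), but your argument is essentially the standard one used there, so there is nothing to add beyond observing that your item~2 conclusion correctly yields both $c_\alpha=0$ and $c_{[-\alpha]}=0$ simultaneously, since the two coefficients are conjugate.
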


We now define projectivity and, in particular, its relationship with Orthogonal Arrays. Given $I=\{i_1,\ldots,i_k\} \subset \{1,\ldots,m\}, i_1<\ldots < i_k$ and $\zeta=(\zeta_1,\ldots,\zeta_m) \in \design$ we define the projection $\pi_I(\zeta)$ as
\[
\pi_I(\zeta)=\zeta_I \equiv (\zeta_{i_1},\ldots,\zeta_{i_k}) \in \design_{i_1} \times \ldots \times \design_{i_k} \, .
\]

\begin{comment}
We now define projectivity and, in particular, its relation with orthogonal arrays. Given $I=\{i_1,\ldots,i_k\} \subset \{1,\ldots,m\}, i_1<\ldots < i_k$ we define the projection $\pi_I$ as
\[
\pi_I: \design \ni \zeta=(\zeta_1,\ldots,\zeta_m) \mapsto \zeta_I \equiv (\zeta_{i_1},\ldots,\zeta_{i_k}) \in \design_{i_1} \times \ldots \times \design_{i_k} \ .
\]
\end{comment}

\begin{definition}
A fraction $\fraction$ {\em factorially projects} onto the $I$-factors, $I=\{i_1,\ldots,i_k\} \subset \{1,\ldots,m\}$, $i_1<\ldots < i_k$, if the projection $\pi_I(\fraction)$ is
a multiple full factorial design, i.e., the multiset $(\design_{i_1} \times \ldots \times \design_{i_k} , f_*)$ where the multiplicity function $f_*$ is constant over $\design_{i_1} \times \ldots \times \design_{i_k}$.
\end{definition}
% ,full factorial design where each point appears equally often.

\begin{comment}
\begin{example}
Let us consider $m=2, s_1=s_2=2$ and the fraction
$\fraction=\{ (-1,1), (-1,1),$
\newline
$(1,-1), (1,1)\}$.
We obtain $\pi_1(\fraction)=\{-1,-1,1,1\}$ and $\pi_2(\fraction)=\{-1,1,1,1\}$. It follows that $\fraction$ projects on the first factor and does not project on the second factor.
\end{example}
\end{comment}

\begin{definition}
A fraction $\fraction$ is a {\em (mixed) Orthogonal
Array (OA)} of strength $t$ if it factorially projects onto any $I$-factors with $\#I=t$.
\end{definition}

%We recall that an OA of strength $t$ is equivalent to an orthogonal resolution $t+1$ plan. In particular:
%\begin{itemize}
%\item in strength $2$/resolution III OAs,main effects are confused with 2-factor interactions;
%\item in strength $3$/resolution IV OAs,  main effects are confused with 3-factor interactions and 2-factor interactions are confused with 2-factor interactions;
%\item in strength $4$/resolution V OAs,  main effects are confused with 4-factor interactions and 2-factor interactions are confused with 3-factor interactions.
%\end{itemize}

%Strength $t$ means that, for any choice of $t$ columns of the matrix design, all possible combinations of symbols
%appear equally often.
\begin{proposition} \label{pr:projectivity}
A fraction factorially projects onto the $I$-factors, $I=\{i_1,\ldots,i_k\} \subset \{1,\ldots,m\}, i_1<\ldots < i_k$, if and only if,
all the coefficients of the counting function involving the $I$-factors only are $0$.
\end{proposition}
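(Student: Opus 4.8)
The plan is to reduce the statement about factorial projection to a statement about the coefficients of the counting function by exploiting the monomial representation and the orthogonality relations of Prop.~\ref{pr:bc-alpha}. First I would observe that ``$\fraction$ factorially projects onto the $I$-factors'' is equivalent, by definition, to saying that the projected multiset $\pi_I(\fraction)$ has constant multiplicity on $\design_{i_1}\times\cdots\times\design_{i_k}$, which in turn is equivalent to saying that the counting function $R_I$ of the projection $\pi_I(\fraction)$ is a constant polynomial on $\design_{i_1}\times\cdots\times\design_{i_k}$; that is, $R_I = c_{\mathbf{0}_I} X^{\mathbf{0}_I}$, so all its coefficients indexed by a nonzero exponent vanish. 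So the crux is to relate the coefficients of $R_I$ (the counting function of the projection) to the coefficients $c_\alpha$ of $R$ (the counting function of $\fraction$) that ``involve the $I$-factors only'', i.e.\ those $c_\alpha$ with $\mathrm{supp}(\alpha)\subseteq I$.

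The key step is the identity relating these two families of coefficients. Using part~\ref{it:balpha} of Prop.~\ref{pr:bc-alpha}, the coefficient of $X^{\gamma}$ in $R_I$, for $\gamma$ supported on the $I$-coordinates, is a normalized sum $\frac{1}{\#(\design_{i_1}\times\cdots\times\design_{i_k})}\sum_{\eta\in\pi_I(\fraction)} \overline{X^{\gamma}(\eta)}$. Since $X^{\gamma}$ only depends on the $I$-coordinates, summing over $\eta\in\pi_I(\fraction)$ is the same as summing $\overline{X^{\gamma}(\zeta)}$ over $\zeta\in\fraction$ (the projection forgets coordinates that $X^{\gamma}$ ignores). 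Hence the coefficient of $X^{\gamma}$ in $R_I$ is, up to the positive normalizing constant $\prod_{j\notin I}s_j$, exactly the coefficient $c_{\alpha}$ of $R$ where $\alpha$ agrees with $\gamma$ on the $I$-coordinates and is zero elsewhere. In other words, the coefficients of $R_I$ are precisely (a nonzero multiple of) the coefficients $c_\alpha$ of $R$ with $\mathrm{supp}(\alpha)\subseteq I$.

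Putting the pieces together: $\fraction$ factorially projects onto the $I$-factors $\iff$ $R_I$ is constant $\iff$ every coefficient of $R_I$ indexed by a nonzero exponent is $0$ $\iff$ every coefficient $c_\alpha$ of $R$ with $\mathrm{supp}(\alpha)\subseteq I$ and $\alpha\neq\mathbf 0$ is $0$, which is exactly the assertion. I would also remark that the case $\alpha=\mathbf 0$ carries no information, since $c_{\mathbf 0}=\#\fraction/\#\design$ is automatically the common multiplicity. The main obstacle — really the only thing needing care — is the bookkeeping in the middle step: one must be careful that the projection of a \emph{multiset} is taken with multiplicities, so that $\sum_{\eta\in\pi_I(\fraction)}$ genuinely equals $\sum_{\zeta\in\fraction}$ term by term, and that the normalization constants $\#\design$ versus $\prod_{j\in I}s_j$ are tracked correctly so the ``$=0$'' equivalences are not disturbed by the (nonzero) scaling factor. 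An alternative, slightly slicker route avoids projections altogether: $X^\gamma$ (with $\mathrm{supp}(\gamma)\subseteq I$) being centered on $\fraction$ for all nonzero such $\gamma$ is, by part~\ref{it:cent1} of Prop.~\ref{pr:bc-alpha}, equivalent to $c_\gamma=c_{[-\gamma]}=0$; and the projection $\pi_I(\fraction)$ is a multiple full factorial design precisely when all nonconstant monomials in the $I$-coordinates are centered on it, which is the same condition. I would present the first route as the main argument and mention the second as a remark.
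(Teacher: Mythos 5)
The paper itself does not prove Prop.~\ref{pr:projectivity}; it is quoted from the literature (the proofs are deferred to \cite{fontana2000classification} and \cite{pistone2008indicator}), so there is no in-paper argument to compare against, and your proposal should be judged on its own. It is correct and is essentially the standard argument: the coefficient of $X^\gamma$ in the counting function of $\pi_I(\fraction)$ equals $\bigl(\prod_{j\notin I}s_j\bigr)\,c_\alpha$, with $\alpha$ the extension of $\gamma$ by zeros, so constancy of the projected counting function (i.e., factorial projection) is equivalent to the vanishing of all $c_\alpha$ with support in $I$ and $\alpha\neq 0$; your bookkeeping of the multiset projection and of the normalizing constant is right, and the alternative route via centeredness and item~\ref{it:cent1} of Prop.~\ref{pr:bc-alpha} is equally valid.
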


Prop.~\ref{pr:projectivity} can be immediately stated for mixed orthogonal arrays.
\begin{proposition} \label{pr:projectivity_ort}
A fraction is an OA of strength $t$ if and
only if all the coefficients $c_{\alpha}, \; \alpha \neq 0 \equiv (0,\ldots,0)$ of the counting function up to the order $t$ are $0$.
\end{proposition}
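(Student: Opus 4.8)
The plan is to derive Proposition~\ref{pr:projectivity_ort} directly from the definition of an OA of strength $t$ and from Proposition~\ref{pr:projectivity}, the only work being bookkeeping on the supports of the multi-indices $\alpha$. First I would fix notation: for $\alpha \in L$ write $\mathrm{supp}(\alpha) = \{j : \alpha_j \neq 0\}$, and recall that the \emph{order} of the interaction term $X^\alpha$ is $\#\mathrm{supp}(\alpha)$. Then "coefficients up to order $t$" means the $c_\alpha$ with $1 \le \#\mathrm{supp}(\alpha) \le t$, while "coefficients of the counting function involving the $I$-factors only" (the wording of Proposition~\ref{pr:projectivity}) means the $c_\alpha$ with $\emptyset \neq \mathrm{supp}(\alpha) \subseteq I$.

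For the "if" direction I would assume that every $c_\alpha$ with $\alpha \neq 0$ and $\#\mathrm{supp}(\alpha) \le t$ vanishes, and fix an arbitrary $I \subseteq \{1,\ldots,m\}$ with $\#I = t$. Any $\alpha$ with $\emptyset \neq \mathrm{supp}(\alpha) \subseteq I$ satisfies $\#\mathrm{supp}(\alpha) \le t$, hence $c_\alpha = 0$; so by Proposition~\ref{pr:projectivity} the fraction factorially projects onto the $I$-factors. Since $I$ was an arbitrary $t$-subset, $\fraction$ is an OA of strength $t$ by definition.

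For the "only if" direction I would assume $\fraction$ is an OA of strength $t$ and take any $\alpha \neq 0$ with $k = \#\mathrm{supp}(\alpha) \le t$. Choosing a set $I$ with $\mathrm{supp}(\alpha) \subseteq I \subseteq \{1,\ldots,m\}$ and $\#I = t$ (which uses $m \ge t$, tacitly assumed throughout), the fact that $\fraction$ projects factorially onto the $I$-factors gives, via Proposition~\ref{pr:projectivity}, that $c_\beta = 0$ for every $\beta$ with $\emptyset \neq \mathrm{supp}(\beta) \subseteq I$; applying this with $\beta = \alpha$ yields $c_\alpha = 0$.

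The only subtlety worth flagging is the passage between the two indexings: the definition of strength $t$ quantifies over all $t$-subsets $I$, whereas the statement quantifies over all multi-indices of order at most $t$. One direction needs the observation that a support of size $k \le t$ can always be enlarged to a size-$t$ set, and the other needs that every coefficient "involving the $I$-factors only" has order at most $\#I = t$. Beyond this, the argument is just an unfolding of Proposition~\ref{pr:projectivity} and the definitions; no manipulation of the explicit formula $c_\alpha = \tfrac{1}{\#\design}\sum_{\zeta \in \fraction}\overline{X^\alpha(\zeta)}$ is required, though a self-contained variant could instead invoke part~\ref{it:balpha} of Proposition~\ref{pr:bc-alpha} and count level combinations in the projected multiset.
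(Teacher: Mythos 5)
Your proof is correct and follows exactly the route the paper intends: the paper presents this proposition as an immediate restatement of Proposition~\ref{pr:projectivity} combined with the definition of strength $t$, and your argument simply spells out the support-bookkeeping (enlarging a support of size $k \le t$ to a $t$-subset, and conversely bounding the order by $\#I = t$) that the paper leaves implicit. Nothing further is needed.
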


\subsection{GWLP and aberrations}
\label{sec:ab_crit}

Using the polynomial counting function, \cite{cheng2004geometric} provides the following definition of the GWLP $A_\fraction=(A_0(\fraction), \ldots, A_m(\fraction))$ of a fraction $\fraction$ of the full factorial design $\design$.

\begin{definition} \label{wlp}
The Generalized Word-Length Pattern (GWLP) of a fraction $\fraction$ of the full factorial design $\design$ is a the vector $A_\fraction=(A_0(\fraction),A_1(\fraction), \ldots , A_m(\fraction))$, where
\[
A_j(\fraction)= \sum_{|\alpha |_0 =j} a_\alpha \quad j=0,\ldots,m \, ,
\]
\begin{equation} \label{aberration}
a_\alpha = \left(  \frac{ \|c_{\alpha}\|_2 }{c_{0}} \right)^2 \, ,
\end{equation}
$| \alpha |_0$ is the number of non-null elements of $\alpha$, $\| z \|_2$ is the norm of the complex number $z$, and $c_0 := c_{(0,\ldots,0)}={\#\fraction}/{\#\design}$.
\end{definition}

We refer to $a_\alpha$ as the \emph{aberration} of the interaction $X^\alpha$. In Prop.~\ref{pr:abe} we provide a formula to compute $a_\alpha$, and consequently $A_j(\fraction)$, $j=1,\ldots,m$, given a fraction $\fraction \subseteq \design$. Notice that $A_0(\fraction)=1$ for all $\fraction$. Moreover, in the case of binary designs, the coefficients of the counting function are real numbers and therefore the aberrations in Eq.~\eqref{aberration} are simply
\[
a_\alpha = \left(  \frac{ c_{\alpha}}{c_{0}} \right)^2 \, .
\]

Given a fraction $\fraction$ of the full factorial design $\design$, let us consider its counting function $R = \sum_{\alpha \in L} c_\alpha X^\alpha$. From item \ref{it:balpha} of Prop.~\ref{pr:bc-alpha} the coefficients $c_\alpha$ are given by
\[
c_\alpha= \frac 1 {\#\design} \sum_{\zeta \in \fraction} \overline{X^\alpha(\zeta)}
\]
or equivalently
\[
c_\alpha= \frac 1 {\#\design} \sum_{\zeta \in \design} R(\zeta) \overline{X^\alpha(\zeta)} \, .
\]
To make the notation easier we use vectors and matrices and we make the non-restrictive hypothesis that both the runs $\zeta$ of the full factorial design $\design$ and the multi-indexes of $L=\Z_{s_1} \times \cdots \times  \Z_{s_m}$ are considered in lexicographic order.
We obtain
\[
c_\alpha=\frac 1 {\#\design} \overline{X}_\alpha^T Y = \frac 1 {\#\design} Y^T \overline{X}_\alpha \, ,
\]
where $X_\alpha$ is the column vector $\left[ \zeta^\alpha : \zeta \in \design \right]$, $\overline{X}_\alpha$ is the column vector $\left[ \overline{\zeta}^\alpha : \zeta \in \design \right]$ $Y$ is the column vector $\left[ R(\zeta) : \zeta \in \design \right]$ and the exponent $T$ denotes the transpose of a matrix. The square of the norm of a complex number $z$ can be computed as $ z \overline {z}$. It follows that
\[
\| c_\alpha \|_2^2 =  c_\alpha \overline{ c_\alpha}
\]
and therefore we get
\[
(\#\design)^2 \| c_\alpha \|_2^2 = (Y^T \overline{X}_\alpha) \overline{( \overline {X}_\alpha^T Y )} = Y^T \overline {X}_\alpha {X_\alpha}^{T} Y \, .
\]
As in \cite{fontana2011counting}, we refer to $Y$ as the \emph{counting vector} of a fraction.

\subsection{Counting vector and aberrations}

Here we present some properties of the aberrations and some results about the relationships between the aberrations and the counting vector of a fraction. The results are adapted to the complex coding for multilevel factors.

\begin{proposition} \label{pr:abe}
Given a fraction $\fraction$ it holds:
\begin{enumerate}
\item $a_\alpha=({Y^T \overline {X}_\alpha {X_\alpha}^{T} Y})/{(\#\fraction)^2}$;
\item $\somma(\overline {X}_\alpha {X_\alpha}^{T})=0$, $\alpha \neq 0$ where $\somma(A)$ is the sum of all the elements of the matrix $A$;
\item $\sum_{j=0}^{m} A_j(\fraction) = \sum_{\alpha \in L} a_\alpha = ({\#\design \sum_{\zeta \in \design} Y[\zeta]^2})/{(\#\fraction)^2}$;
\item if $Y[\zeta] \in \{0,1\}, \zeta \in \design$ then $\sum_{i=0}^{m} A_i(\fraction)={\#\design}/{\#\fraction}\equiv c_0^{-1}$.
\end{enumerate}
\end{proposition}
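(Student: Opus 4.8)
The plan is to reduce all four items to the single identity established just above the statement, namely $(\#\design)^2\,\|c_\alpha\|_2^2 = Y^T\overline{X}_\alpha {X_\alpha}^T Y$, together with the classical orthogonality relations for the monomial basis $\{X^\alpha:\alpha\in L\}$ over the full factorial design $\design$. Item~1 is then immediate: by Definition~\ref{wlp}, $a_\alpha = \|c_\alpha\|_2^2/c_0^2$, and since $c_0 = \#\fraction/\#\design$ one gets $a_\alpha = (\#\design)^2\|c_\alpha\|_2^2/(\#\fraction)^2$; substituting the boxed identity gives the claim.

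For item~2, I would use that $\overline{X}_\alpha {X_\alpha}^T$ is a rank-one matrix, so the sum of its entries factors as $\somma(\overline{X}_\alpha {X_\alpha}^T)=(\mathbf{1}^T\overline{X}_\alpha)({X_\alpha}^T\mathbf{1})=\big|\sum_{\zeta\in\design}\zeta^\alpha\big|^2$, where $\mathbf{1}$ is the all-ones vector of length $\#\design$. The point is then that $\sum_{\zeta\in\design}\zeta^\alpha$ factors as $\prod_{j=1}^m\sum_{k=0}^{s_j-1}\omega_k^{(s_j)\alpha_j}$, and each inner sum ranges over a full set of $s_j$-th roots of unity, hence vanishes as soon as $\alpha_j\neq 0$; since $\alpha\neq 0$ forces at least one such $j$, the whole product is $0$. (Equivalently, this is item~\ref{it:cent1} of Prop.~\ref{pr:bc-alpha} applied to $\fraction=\design$, i.e.\ the fact that the full factorial design centers every non-trivial term.)

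The heart of the argument is item~3, and it is the one place where a genuine computation is needed. Using item~1 and the fact that the level sets $\{\alpha\in L:|\alpha|_0=j\}$, $j=0,\ldots,m$, partition $L$, we have $\sum_{j=0}^m A_j(\fraction)=\sum_{\alpha\in L}a_\alpha=Y^T\big(\sum_{\alpha\in L}\overline{X}_\alpha {X_\alpha}^T\big)Y/(\#\fraction)^2$, so everything reduces to evaluating the matrix $M:=\sum_{\alpha\in L}\overline{X}_\alpha {X_\alpha}^T$. Its $(\zeta,\eta)$ entry is $\sum_{\alpha\in L}\overline{\zeta^\alpha}\,\eta^\alpha=\prod_{j=1}^m\sum_{\alpha_j=0}^{s_j-1}(\overline{\zeta_j}\eta_j)^{\alpha_j}$; for each $j$ the inner geometric sum equals $s_j$ if $\zeta_j=\eta_j$ and $0$ otherwise, since in the latter case $\overline{\zeta_j}\eta_j$ is a non-trivial $s_j$-th root of unity. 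Hence $M=\#\design\cdot I$, the $(\#\design)\times(\#\design)$ identity matrix, so $\sum_{\alpha\in L}a_\alpha=\#\design\,Y^TY/(\#\fraction)^2=\#\design\sum_{\zeta\in\design}Y[\zeta]^2/(\#\fraction)^2$, which is item~3. This orthogonality step — really the assertion that the monomial basis is orthogonal with squared norm $\#\design$ over $\design$ — is the only mildly delicate part, and it is routine.

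Finally, item~4 follows from item~3 by specialization: if $Y[\zeta]\in\{0,1\}$ then $Y[\zeta]^2=Y[\zeta]$, so $\sum_{\zeta\in\design}Y[\zeta]^2=\sum_{\zeta\in\design}Y[\zeta]=\#\fraction$, and item~3 collapses to $\sum_{i=0}^m A_i(\fraction)=\#\design/\#\fraction=c_0^{-1}$, using once more $c_0=\#\fraction/\#\design$.
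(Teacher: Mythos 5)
Your proof is correct and follows essentially the same route as the paper: reduce everything to the identity $(\#\design)^2\|c_\alpha\|_2^2 = Y^T\overline{X}_\alpha X_\alpha^T Y$, sum over $\alpha\in L$ using orthogonality of the monomial basis to get $\#\design\, Y^TY$, and specialize to $0$--$1$ counting vectors for the last item. The only cosmetic difference is that you verify the root-of-unity sums explicitly in items 2 and 3, whereas the paper gets item 2 by applying item 1 to the full design $\design$ (whose non-trivial aberrations vanish) and simply invokes the orthogonality of the matrix $X$ whose columns are the $X_\alpha$ for item 3.
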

\begin{proof}
\begin{enumerate}
	\item From the definition of $a_\alpha$ we get
	\[
	a_\alpha = \left(  \frac{ \|c_{\alpha}\|_2 }{c_{0}} \right)^2=\frac{(1/\#\design)^2 Y^T \overline {X}_\alpha {X_\alpha}^{T} Y}{(\#\fraction/\#\design)^2}=\frac{Y^T \overline {X}_\alpha {X_\alpha}^{T} Y}{(\#\fraction)^2} \, .
	\]
	\item Let us consider the full factorial design $\design$. Its counting vector is $1$, i.e., the column vector with all the components equal to $1$. The coefficients of its counting function are $c_{0}=1$ and $c_{\alpha}=0$ for all $\alpha \neq 0$. We get $a_{\alpha}=0$ for all $\alpha \neq 0$. It follows that the sum of all the elements of the matrix $\overline {X}_\alpha {X_\alpha}^{T}$ is
	\[
	\somma(\overline {X}_\alpha {X_\alpha}^{T})= 1^T \overline {X}_\alpha {X_\alpha}^{T} 1 = (\#\design)^2 a_\alpha = 0, \; \alpha \neq 0 \, .
	\]
	\item The sum of all the terms of the generalized word-length pattern is
	\begin{eqnarray*}
\sum_{j=0}^{m} A_j(\fraction)=\sum_{\alpha \in L} a_\alpha=\sum_{\alpha \in L} \frac{Y^T \overline {X}_\alpha {X_\alpha}^{T} Y}{(\#\fraction)^2}=\\
=\frac{Y^T \sum_{\alpha \in L} (\overline {X}_\alpha {X_\alpha}^{T}) Y}{(\#\fraction)^2}=
\frac{Y^T \overline {X} {X}^{T} Y}{(\#\fraction)^2} =\\
= \frac{\#\design Y^T Y}{(\#\fraction)^2} = \frac{\#\design \sum_{\zeta \in \design} Y(\zeta)^2}{(\sum_{\zeta \in \design} Y(\zeta))^2} \, ,
\end{eqnarray*}
where $X$ is the orthogonal matrix whose columns are $X_\alpha, \alpha \in L$.
\item It follows from item 3. by observing that $Y[\zeta] \in \{0,1\}, \zeta \in \design \Rightarrow Y[\zeta]^2=Y[\zeta]$ and then $\sum_{\zeta \in \design} Y[\zeta]^2=\#\fraction$.
\end{enumerate}
\end{proof}

From items 3. and 4. of Prop.~\ref{pr:abe} we obtain that, for a given size $n$, the total aberration of a single-replicate fraction $\fraction_1$ (with counting vector $Y_1$) will be less than the total aberration of a fraction $\fraction_2$ (with counting vector $Y_2$) that admits replications. In fact, we get
\[
\sum_{j=0}^m A_j(\fraction_1)=\frac{\#\design}{n}, \qquad
\sum_{j=0}^m A_j(\fraction_2)=\frac{\#\design \sum_{\zeta \in \design} Y_2[\zeta]^2}{n^2}
\]
and
\[
\frac{\#\design}{n} \leq \frac{\#\design}{n} \frac{\sum_{\zeta \in \design} Y_2[\zeta]^2}{n}
\]
because, given $n$, $\sum_{\zeta \in \design} Y_2[\zeta]^2 \geq n$.

Now, as in \cite{gromping2014generalized}, let us consider the special case of OAs of size $n$ and strength $t$ (or equivalently with resolution $t+1$), with $m=t+1$ factors. Using the standard notation, we denote this class of OAs by $OA(n, s_1\ldots s_m, m-1)$. We can state the following proposition.

\begin{proposition} \label{prop:ar}
Let $\fraction \in OA(n, s_1 \ldots s_m, m-1)$. Then
\[
A_m(\fraction)=\frac{\#\design \sum_{\zeta \in \design} Y[\zeta]^2 - n^2}{n^2} \, .
\]
If $\fraction$ is a single-replicate OA (i.e. $Y[\zeta] \in \{0,1\}, \zeta \in \design$) then
\[
A_m(\fraction)=\frac{\#\design-n}{n} \, .
\]
\end{proposition}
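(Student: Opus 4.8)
The plan is to exploit the fact that strength $m-1$ with exactly $m$ factors forces the GWLP of $\fraction$ to be supported on only two positions, $0$ and $m$, and then to read off $A_m(\fraction)$ from the total-aberration identity in item~3 of Prop.~\ref{pr:abe}.

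First I would apply Prop.~\ref{pr:projectivity_ort} with $t=m-1$: since $\fraction\in OA(n,s_1\ldots s_m,m-1)$, every coefficient $c_\alpha$ with $\alpha\neq 0$ of order at most $m-1$ vanishes, hence by \eqref{aberration} the corresponding aberrations $a_\alpha$ vanish as well. As $\alpha$ ranges over $L$ the quantity $|\alpha|_0$ takes values in $\{0,1,\ldots,m\}$, and the only $\alpha\neq 0$ not covered by the strength condition are those with $|\alpha|_0=m$. Therefore $A_j(\fraction)=\sum_{|\alpha|_0=j}a_\alpha=0$ for $j=1,\ldots,m-1$, while $A_0(\fraction)=1$ as always, so the sum $\sum_{j=0}^m A_j(\fraction)$ collapses to $1+A_m(\fraction)$.

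Next I would invoke item~3 of Prop.~\ref{pr:abe}, which gives $\sum_{j=0}^m A_j(\fraction)=\#\design\sum_{\zeta\in\design}Y[\zeta]^2/(\#\fraction)^2$. Since $\#\fraction=n$, equating the two expressions for the total aberration yields $1+A_m(\fraction)=\#\design\sum_{\zeta\in\design}Y[\zeta]^2/n^2$, i.e.
\[
A_m(\fraction)=\frac{\#\design\sum_{\zeta\in\design}Y[\zeta]^2-n^2}{n^2},
\]
which is the first assertion. For the single-replicate case I would note that $Y[\zeta]\in\{0,1\}$ implies $Y[\zeta]^2=Y[\zeta]$, so $\sum_{\zeta\in\design}Y[\zeta]^2=\sum_{\zeta\in\design}Y[\zeta]=\#\fraction=n$; substituting gives $A_m(\fraction)=(\#\design\,n-n^2)/n^2=(\#\design-n)/n$. (Alternatively one can quote item~4 of Prop.~\ref{pr:abe} directly at this last step.)

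There is essentially no serious obstacle: the whole argument is a short chain of substitutions once the vanishing $A_1(\fraction)=\cdots=A_{m-1}(\fraction)=0$ is in place. The only point deserving a line of care is the combinatorial remark that, with precisely $m$ factors, no nonzero multi-index has $|\alpha|_0$ strictly between the strength $m-1$ and $m$, so the strength hypothesis really does annihilate every intermediate $A_j(\fraction)$.
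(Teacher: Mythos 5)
Your proof is correct and follows essentially the same route as the paper: the strength-$(m-1)$ condition forces $A_0(\fraction)=1$, $A_1(\fraction)=\cdots=A_{m-1}(\fraction)=0$, and then item~3 of Prop.~\ref{pr:abe} gives $A_m(\fraction)$ by subtraction, with the single-replicate case following from $Y[\zeta]^2=Y[\zeta]$ (item~4). The only difference is that you make explicit the appeal to Prop.~\ref{pr:projectivity_ort} for the vanishing of the intermediate terms, which the paper leaves implicit.
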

\begin{proof}
Let us consider $\fraction \in OA(n, s_1 \ldots s_m, m-1)$. Then
\[
A_0(\fraction)=1,A_1(\fraction)= \cdots = A_{m-1}(\fraction)=0 \, .
\]
From item 3. of Prop.~\ref{pr:abe} we get
\begin{multline*}
A_m(\fraction)=\sum_{j=0}^m A_j(\fraction) - \sum_{j=0}^{m-1} A_j(\fraction)= \\
= \frac{\#\design \sum_{\zeta \in \design} Y[\zeta]^2}{(\#\fraction)^2}-1=  \frac{\#\design \sum_{\zeta \in \design} Y[\zeta]^2-(\#\fraction)^2}{(\#\fraction)^2} \, .
\end{multline*}
%\marginpar{RF: non credo che serva $H^{\alpha}$} We denote by $H^{\alpha}=[ h_{ij}: i,j=1,\ldots,\#\design ]$ the matrix $\overline {X^\alpha} {X^\alpha}^{T}$. By construction the matrix $H^{\alpha}$ is Hermitian and positive-definite.
In the special case $Y[\zeta] \in \{0,1\}, \zeta \in \design$ we get
\[
A_m(\fraction)=\frac{\#\design - \#\fraction}{\#\fraction} \, .
\]
\end{proof}
%This is an improvement of Theorem 5 of \cite{}
We obtain a lower bound for $A_m(\fraction)$ as in Theorem 5 of \cite{gromping2014generalized}.
\begin{proposition} \label{pr:LB}
Let $\fraction \in OA(n, s_1 \ldots s_m, m-1)$. Then
\[
A_m(\fraction)\geq \frac{r(\#\design-r)}{n^2} \, ,
\]
where $q$ and $r$ are the quotient and the remainder when $n$ is divided by $\#\design$, $n=q\#\design+r$ (and $q=0$ when $n < \#\design$).
\end{proposition}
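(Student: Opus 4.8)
\emph{Proof plan.}
The plan is to combine the closed form for $A_m(\fraction)$ from Proposition~\ref{prop:ar} with a purely combinatorial lower bound on $\sum_{\zeta \in \design} Y[\zeta]^2$. Since $\fraction \in OA(n, s_1\ldots s_m, m-1)$, Proposition~\ref{prop:ar} gives
\[
A_m(\fraction) = \frac{\#\design \sum_{\zeta \in \design} Y[\zeta]^2 - n^2}{n^2},
\]
so it suffices to bound $\sum_{\zeta \in \design} Y[\zeta]^2$ from below. The counting vector $Y$ has nonnegative integer entries and $\sum_{\zeta \in \design} Y[\zeta] = \#\fraction = n$; beyond these two facts the OA structure plays no further role in the estimate, so at this point I would discard it and minimise $\sum_{\zeta \in \design} Y[\zeta]^2$ over \emph{all} vectors $Y \in \mathbb{N}^{\#\design}$ with entry sum $n$.

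First I would establish the elementary fact that such a sum of squares is minimised when the entries are as equal as possible. If two entries satisfied $Y[\zeta] \geq Y[\zeta'] + 2$, then replacing the pair $(Y[\zeta], Y[\zeta'])$ by $(Y[\zeta]-1, Y[\zeta']+1)$ keeps the entry sum fixed, keeps all entries nonnegative, and changes $\sum_{\zeta} Y[\zeta]^2$ by $-2(Y[\zeta]-Y[\zeta']-1) < 0$; hence at a minimiser all entries differ by at most $1$. Writing $n = q\#\design + r$ with $0 \le r < \#\design$ (so $q=0$ when $n<\#\design$), the minimiser then has $r$ entries equal to $q+1$ and $\#\design - r$ entries equal to $q$, which yields
\[
\sum_{\zeta \in \design} Y[\zeta]^2 \;\geq\; r(q+1)^2 + (\#\design - r)q^2 \;=\; \#\design\, q^2 + r(2q+1).
\]

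Substituting this into the formula for $A_m(\fraction)$, I would then expand
\[
\#\design \sum_{\zeta \in \design} Y[\zeta]^2 - n^2 \;\geq\; \#\design\bigl(\#\design\, q^2 + r(2q+1)\bigr) - (q\#\design + r)^2 \;=\; r(\#\design - r),
\]
where the $q^2\#\design^2$ and $2qr\#\design$ terms cancel, leaving the routine simplification to $r\#\design - r^2$; dividing by $n^2$ gives $A_m(\fraction) \geq r(\#\design - r)/n^2$.

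The only real content is the exchange argument pinning down the even-allocation minimiser; everything else is bookkeeping, so I do not expect a genuine obstacle. The point worth flagging is that the bound does \emph{not} require the even allocation to be realisable as the counting vector of an actual OA of strength $m-1$: we are lower-bounding over a set strictly larger than the set of OA counting vectors, which is precisely why the inequality holds unconditionally (and why it is typically not attained).
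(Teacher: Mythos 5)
Your proposal is correct and follows essentially the same route as the paper: invoke Proposition~\ref{prop:ar}, lower-bound $\sum_{\zeta \in \design} Y[\zeta]^2$ by the balanced allocation with $r$ entries equal to $q+1$ and $\#\design-r$ entries equal to $q$, and simplify to $r(\#\design-r)/n^2$. The only difference is that you justify the balanced minimiser by an explicit exchange argument, which the paper simply asserts, so your write-up is if anything slightly more complete.
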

\begin{proof}
From Prop.~\ref{prop:ar} we know that
\[
A_m(\fraction)=\frac{\#\design \sum_{\zeta \in \design} Y[\zeta]^2 - n^2}{n^2} \, .
\]
If we divide $n$ by $\#\design$ we can write $n=q\#\design+r$. The counting vector $\tilde{Y}$ that minimizes $\sum_{\zeta \in \design} Y[\zeta]^2$  must be defined as
\[
\tilde{Y}[\zeta] =
\begin{cases}
q+1 \;\; \text{if } \zeta \in B_r \\
q   \; \; \; \; \; \; \; \; \; \text{if } \zeta \in \design - B_r
\end{cases}
\]
where $B_r$ is any subset of $\design$ with $r$ points.
We obtain
\[
\sum_{\zeta \in \design} \tilde{Y}[\zeta]^2=\#\design q^2 + 2rq +r \, .
\]
It follows that
\[
A_m(\fraction) \geq \frac{\#\design (\#\design q^2 + 2rq +r) - (q\#\design+r)^2}{(q\#\design+r)^2} \, .
\]
By simple algebra we obtain
\[
A_m(\fraction) \geq \frac{r(\#\design - r)}{(\#\fraction)^2} \, .
\]
\end{proof}
When we consider $m>t+1$ factors a lower bound for $A_{t+1}(\fraction)$ can be obtained by summing up all the lower bounds that are obtained using Prop.~\ref{pr:LB} for all the $\binom{m}{t+1}$ subsets of $t+1$ factors of $\design_1,\ldots,\design_m$.

\section{The counting function of the union of fractions}
\label{sec:3}

In this section we analyze the behavior of the aberrations (and thus of the GWLP) of a fraction obtained by merging two or more fractions. In particular we focus on OAs which can be expressed as the union of other OAs.

First, it is worth noting that given a fraction ${\mathcal F}$ with counting function $R(\zeta)$, we can consider a fraction $\nu{\mathcal F}$ obtained by replicating $\nu$ times each design point of ${\mathcal F}$. In such a case, it is immediate to check that the counting function of $\nu{\mathcal F}$ is simply $\nu R(\zeta)$, and therefore all aberrations remain unchanged:
\[
a_\alpha^{(vR)}=a_\alpha^{(R)}, \quad \mbox{ for all } \alpha \in L \, .
\]

In the following proposition we consider the union of $k$ fractions, $k\geq 2$.

\begin{proposition} \label{union:prop}
Let us consider fractions ${\mathcal F}_1, \ldots, {\mathcal F}_k$ with $n_1, \ldots, n_k$ design points, respectively.
Let us denote by $R_i=\sum_{\alpha \in L} c_\alpha^{(i)}$ the counting function of $\fraction_i$, $i=1,\ldots,k$, by ${\mathcal F}$ the union ${\mathcal F} = {\mathcal F}_1 \cup \cdots \cup {\mathcal F}_k$, by $R=\sum_{i=1}^k R_i$ the counting function of $\fraction$ and by $n$ the size of $\fraction$, $n=n_1 + \ldots + n_k$.

The $j$-th element of the GWLP of $\fraction$ is
\begin{equation} \label{WLP-1}
A_j (\fraction) = \sum_{i=1}^k \frac {n_i^2} {n^2} A_i(\fraction_i) + 2 \frac {(\#\design)^2}{n^2} \sum_{i_1<i_2}  \sum_{|\alpha|_0=j} \xRe( c_\alpha^{(i_1)} \overline{c}_\alpha^{(i_2)}), \; \; j=0,\ldots,m \, .
\end{equation}
\end{proposition}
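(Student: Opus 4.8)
The plan is to compute $A_j(\fraction)$ directly from Definition~\ref{wlp} by expanding the squared norm of the coefficients of $R$ in terms of the coefficients of the $R_i$. Since $\fraction = \fraction_1 \cup \cdots \cup \fraction_k$ as multisets, the counting vectors add, $Y = Y_1 + \cdots + Y_k$, and hence the counting functions add, $R = \sum_{i=1}^k R_i$; comparing monomial coefficients gives $c_\alpha = \sum_{i=1}^k c_\alpha^{(i)}$ for every $\alpha \in L$. In particular $c_0 = \sum_i c_0^{(i)} = \sum_i n_i/\#\design = n/\#\design$, which fixes the normalizing constant. I would then write $a_\alpha = \|c_\alpha\|_2^2 / c_0^2 = (\#\design)^2 \|c_\alpha\|_2^2 / n^2$ and focus on $\|c_\alpha\|_2^2$.

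Next I would expand the norm using $\|z\|_2^2 = z\overline z$:
\[
\left\| \sum_{i=1}^k c_\alpha^{(i)} \right\|_2^2 = \sum_{i=1}^k \|c_\alpha^{(i)}\|_2^2 + \sum_{i_1 \neq i_2} c_\alpha^{(i_1)} \overline{c}_\alpha^{(i_2)} = \sum_{i=1}^k \|c_\alpha^{(i)}\|_2^2 + 2 \sum_{i_1 < i_2} \xRe\!\left( c_\alpha^{(i_1)} \overline{c}_\alpha^{(i_2)} \right) \, ,
\]
where the last equality uses that $c_\alpha^{(i_1)} \overline{c}_\alpha^{(i_2)}$ and $c_\alpha^{(i_2)} \overline{c}_\alpha^{(i_1)}$ are complex conjugates, so their sum is twice the real part. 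For the diagonal terms, the definition of aberration applied to $\fraction_i$ gives $\|c_\alpha^{(i)}\|_2^2 = (c_0^{(i)})^2 a_\alpha^{(i)} = (n_i/\#\design)^2 a_\alpha^{(i)}$. Substituting back,
\[
a_\alpha = \frac{(\#\design)^2}{n^2}\left( \sum_{i=1}^k \frac{n_i^2}{(\#\design)^2} a_\alpha^{(i)} + 2 \sum_{i_1<i_2} \xRe\!\left(c_\alpha^{(i_1)} \overline{c}_\alpha^{(i_2)}\right)\right) = \sum_{i=1}^k \frac{n_i^2}{n^2} a_\alpha^{(i)} + 2\frac{(\#\design)^2}{n^2} \sum_{i_1<i_2} \xRe\!\left(c_\alpha^{(i_1)} \overline{c}_\alpha^{(i_2)}\right) \, .
\]
Finally I would sum over all $\alpha$ with $|\alpha|_0 = j$, using $A_j(\fraction) = \sum_{|\alpha|_0=j} a_\alpha$ and, for each $i$, $A_j(\fraction_i) = \sum_{|\alpha|_0=j} a_\alpha^{(i)}$; the coefficients $n_i^2/n^2$ and $2(\#\design)^2/n^2$ are constant in $\alpha$ and pull through the sum, yielding Eq.~\eqref{WLP-1}.

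The argument is essentially bookkeeping, so there is no deep obstacle; the one point requiring a little care is the passage from the full double sum $\sum_{i_1 \neq i_2}$ to $2\sum_{i_1 < i_2}\xRe(\cdot)$, which must invoke that the two cross terms for a given unordered pair are conjugates. One should also note the (harmless) edge case $j=0$: there $a_0 = \|c_0\|_2^2/c_0^2 = 1$ for every fraction and the cross terms contribute $c_0^{(i_1)}c_0^{(i_2)}$, and a direct check shows the right-hand side of \eqref{WLP-1} collapses to $1 = A_0(\fraction)$, consistent with the claimed range $j=0,\ldots,m$. No special hypothesis (such as the $\fraction_i$ being OAs) is needed for the identity itself; the OA structure only becomes relevant when one later uses it to kill low-order terms.
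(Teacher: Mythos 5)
Your proposal is correct and follows essentially the same route as the paper: expand $\|\sum_i c_\alpha^{(i)}\|_2^2$ into squared norms plus twice the real parts of the cross products, rewrite the diagonal terms via $\|c_\alpha^{(i)}\|_2^2=(n_i/\#\design)^2 a_\alpha^{(i)}$, normalize by $c_0=n/\#\design$, and sum over $|\alpha|_0=j$. The only difference is presentational: the paper carries out the computation for $k=2$ and notes the extension to general $k$ is straightforward, whereas you handle general $k$ directly, which is a harmless (indeed slightly more complete) variant.
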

\begin{proof}
Let us consider $k=2$, i.e. $\fraction = \fraction_1 \cup \fraction_2$.
The aberration $a_\alpha^{(R)}$ is
\[
a_\alpha^{(R)} = \frac{(\|c_\alpha^{(1)}+c_\alpha^{(2)}\|_2)^2}{(c_0^{(1)}+c_0^{(2)})^2} \, .
\]
We obtain
\begin{eqnarray*}
(\|c_\alpha^{(1)}+c_\alpha^{(2)}\|_2)^2=(\|c_\alpha^{(1)}\|_2)^2+(\|c_\alpha^{(2)}\|_2)^2+2\xRe(c_\alpha^{(1)} \overline{c}_\alpha^{(2)})=\\
=(\frac{n_1}{\#\design})^2 a_\alpha^{(1)} + (\frac{n_2}{\#\design})^2 a_\alpha^{(2)} + 2\xRe(c_\alpha^{(1)} \overline{c}_\alpha^{(2)})=\\
=\frac{1}{(\#\design)^2} \left( n_1^2 a_\alpha^{(1)} +  n_2^2 a_\alpha^{(2)} +  2(\#\design)^2 \xRe(c_\alpha^{(1)} \overline{c}_\alpha^{(2)}) \right)
\end{eqnarray*}
where $a_\alpha^{(i)}$ refers to $\fraction_i$, i=1,2.
We also obtain
\[
(c_0^{(1)}+c_0^{(2)})^2=(\frac{n_1}{\#\design}+\frac{n_2}{\#\design})^2=\frac{n^2}{(\#\design)^2} \, .
\]
It follows
\[
a_\alpha^{(R)}=\frac{1}{n^2} \left( n_1^2 a_\alpha^{(1)} +  n_2^2 a_\alpha^{(2)} +  2(\#\design)^2 \xRe(c_\alpha^{(1)} \overline{c}_\alpha^{(2)}) \right)
\]
and
\begin{multline*}
A_j(\fraction)=\sum_{|\alpha|_0=j} a_\alpha^{(R)}= \\
= \left(\frac{n_1}{n}\right)^2 A_j(\fraction_1)+ \left(\frac{n_2}{n}\right)^2 A_j(\fraction_2)+
2\left(\frac{\#\design}{n}\right)^2 \sum_{|\alpha|_0=j} \xRe(c_\alpha^{(1)} \overline{c}_\alpha^{(2)})
\end{multline*}
for $j=0,1,\ldots,m$.

The generalization of this formula to the case $k > 2$ is straightforward.
\end{proof}

In case of two-level designs, $c_\alpha \in \field{R}$ and thus Eq.~\ref{WLP-1} becomes
\[
A_j (\fraction) = \sum_{i=1}^k \frac {n_i^2} {n^2} A_i(\fraction_i) + 2 \frac {(\#\design)^2}{n^2} \sum_{i_1<i_2}  \sum_{|\alpha|_0=j} c_\alpha^{(i_1)} c_\alpha^{(i_2)}, \; \; j=0,\ldots,m \, .
\]
The term $\sum_{|\alpha|_0=j} c_\alpha^{(i_1)} c_\alpha^{(i_2)}$ can be viewed as a kind of covariance between the coefficients of order $j$ of the two counting functions $R_{i_1}$ and $R_{i_2}$. 

To illustrate the use of Prop.~\ref{union:prop} on a very small example, let us consider the two regular fractions of the $2^3$ design, whose union is the full-factorial:
\[
{\mathcal F}_1 = {X_1X_2X_3=-1} \qquad R_1= \frac 1 2 (1 - X_1X_2X_3) \, ;
\]
\[
{\mathcal F}_2 = {X_1X_2X_3=+1} \qquad R_1= \frac 1 2 (1 + X_1X_2X_3) \, .
\]
In this case we have
\begin{eqnarray*}
A_0(\fraction_1)=1, A_1(\fraction_1)=A_2(\fraction_1)=0, A_3(\fraction_1)=1 \, ; \\
A_0(\fraction_2)=1, A_1(\fraction_2)=A_2(\fraction_2)=0, A_3(\fraction_2)=1 \, .
\end{eqnarray*}
As expected we obtain $A_0(\fraction)=1, A_1(\fraction)=A_2(\fraction)=0$ and
\[
A_3(\fraction)=\left(\frac{4}{8}\right)^2 A_3(\fraction_1) + \left(\frac{4}{8}\right)^2 A_3(\fraction_2) + 2 \left(\frac{8}{4}\right)^2 c_{111}^{(1)} c_{111}^{(2)} =0
\]
because $c_{111}^{(1)}=-1/2$ and $c_{111}^{(2)}=1/2$.

\section{The Hilbert basis for Orthogonal Arrays}

In this section we define the set $OA(\bullet,{\mathcal D},t)$ of all the OAs with strength $t$ of the full design $\mathcal D$ and we study its combinatorial and geometric properties. With respect to the standard notation, we allow the cardinality to vary, because our study will concern the union of two or more OAs, and thus we use the symbol $\bullet$ in place of the cardinality of the fraction. In the case of binary designs, this set has already been considered in \cite{carlini|pistone:07}, where the reader can find also a simple and comprehensive summary of the basic definitions from Combinatorics used here. The generalization to mixed-level designs can be found in \cite{fontana2013algebraic}.

As a preliminary remark, notice that to the set $OA(\bullet,{\mathcal D},t)$ can be associated in a natural way the set of the corresponding counting functions. With as slight abuse of notation, we use the same notation for both these sets.

\begin{lemma}
The set $OA(\bullet,{\mathcal D},t)$ can be written in the form
\begin{equation} \label{latticecone}
OA(\bullet,{\mathcal D},t) = C \cap {\mathbb N}^{\#{\mathcal D}}
\end{equation}
where $C$ is a polyhedral cone in ${\mathbb R}^{\#\mathcal D}$.
\end{lemma}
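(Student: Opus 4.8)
The plan is to read every fraction through its counting vector and to observe that the strength-$t$ condition is linear in that vector. Concretely, by Definition~\ref{de:indicator} a fraction $\fraction$ of $\design$ is the same datum as its counting vector $Y_\fraction = [R(\zeta):\zeta\in\design]\in\mathbb{N}^{\#\design}$, and $\fraction\mapsto Y_\fraction$ is a bijection between fractions of $\design$ and $\mathbb{N}^{\#\design}$ (a fraction is precisely a choice of nonnegative integer multiplicity for each point of $\design$). Under this identification $OA(\bullet,\design,t)$ becomes a subset of $\mathbb{N}^{\#\design}$, so it suffices to exhibit a polyhedral cone $C\subseteq\mathbb{R}^{\#\design}$ with $C\cap\mathbb{N}^{\#\design}$ equal to it.

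First I would invoke Proposition~\ref{pr:projectivity_ort}: $\fraction$ is an OA of strength $t$ if and only if $c_\alpha=0$ for every $\alpha\neq 0$ with $|\alpha|_0\leq t$. By item~\ref{it:balpha} of Proposition~\ref{pr:bc-alpha}, equivalently the matrix identity $c_\alpha=\frac{1}{\#\design}\,\overline{X}_\alpha^{T}Y$ recorded in Section~\ref{sec:ab_crit}, each $c_\alpha$ is a fixed $\mathbb{C}$-linear functional of $Y$. Hence the finitely many requirements ``$c_\alpha=0$, $\alpha\neq 0$, $|\alpha|_0\leq t$'', after separating real and imaginary parts, form a finite homogeneous linear system in the real unknowns $Y[\zeta]$; let $V\subseteq\mathbb{R}^{\#\design}$ be the linear subspace it defines and put
\[
C := V\cap\mathbb{R}_{\geq 0}^{\#\design} \, .
\]
Then $C$ is a polyhedral cone, being the intersection of two polyhedral cones (a linear subspace and the nonnegative orthant), and it is in fact pointed since $C\subseteq\mathbb{R}_{\geq 0}^{\#\design}$. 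Finally, because $\mathbb{N}^{\#\design}\subseteq\mathbb{R}_{\geq 0}^{\#\design}$, a vector $Y\in\mathbb{N}^{\#\design}$ lies in $C$ exactly when it lies in $V$, i.e.\ exactly when the associated fraction satisfies the conditions of Proposition~\ref{pr:projectivity_ort}; this gives $OA(\bullet,\design,t)=C\cap\mathbb{N}^{\#\design}$.

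The one subtlety worth flagging --- which is also what the later Hilbert-basis machinery needs --- is rationality of $C$. The coefficients $\overline{X}_\alpha(\zeta)$ are $s_j$-th roots of unity, hence irrational in the genuinely mixed-level case, so $V$ is a priori only a real subspace. This is cured by replacing the equations $c_\alpha=0$ with the equivalent combinatorial form of factorial projection (Proposition~\ref{pr:projectivity}): for each $I\subseteq\{1,\ldots,m\}$ with $\#I=t$ and each pair of level combinations $v,v'$ of $\design_{i_1}\times\cdots\times\design_{i_t}$, impose $\sum_{\zeta:\,\pi_I(\zeta)=v}Y[\zeta]=\sum_{\zeta:\,\pi_I(\zeta)=v'}Y[\zeta]$; these are homogeneous equations with coefficients in $\{-1,0,1\}$ defining the same subspace $V$, so $C$ may be taken to be a pointed rational polyhedral cone. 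I expect the main (though routine) point to nail down is precisely this equivalence of the two linear descriptions of $V$, together with checking that $\fraction\mapsto Y_\fraction$ is the required bijection; the cone-theoretic assertions are then immediate from Minkowski--Weyl.
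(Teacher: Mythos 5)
Your proof is correct and ends up at exactly the paper's construction: the paper also identifies a fraction with its counting vector and defines the cone by the linear constraints forcing all $t$-marginals to be constant (differences of marginal cell counts equal to zero), written as $Ax\geq 0$. Your initial detour through the conditions $c_\alpha=0$ of Proposition~\ref{pr:projectivity_ort}, followed by the switch to the marginal-equality description to ensure rationality and pointedness, only adds detail (and a useful caveat) to what the paper states in a single sentence, so the approach is essentially the same.
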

\begin{proof}
Recall that a subset of ${\mathbb R}^k$ is a cone if for all $x,y \in C$ and for all $\lambda, \mu \in {\mathbb R}$ we have $\lambda x + \mu y \in C$, and it is a polyhedral cone if in addition it can be written in the form
\begin{equation} \label{latticecone2}
C = \left\{x \in {\mathbb R}^k  \ : \ Ax \geq 0 \right\} \, .
\end{equation}
In this setting it is enough to define the matrix $A$ in such a way all the $t$-marginals of $x$ are constant (i.e., the difference of any two elements in a $t$-marginal is equal to $0$).
\end{proof}

In Combinatorics, objects like $OA(\bullet,{\mathcal D},t)$ expressed as the lattice points of a cone as in Eq. \eqref{latticecone} are widely studied. See, e.g., Chapter 6 in \cite{miller2006combinatorial} for a general introduction to semigroups, lattice ideals, and Hilbert bases. In this paper, we focus on the notion of Hilbert basis of a lattice, and we specialize its definition.

\begin{definition}
A Hilbert basis of $OA(\bullet,{\mathcal D},t)$ is an inclusion-minimal finite set of Orthogonal Arrays ${\mathcal B}_1, \ldots , {\mathcal B}_r$ such that each Orthogonal Array ${\mathcal F} \in OA(\bullet,{\mathcal D},t)$ is
\[
{\mathcal F} = c_1{\mathcal B}_1 + \cdots + c_r{\mathcal B}_r
\]
with coefficients $c_1, \ldots, c_r \in {\mathbb N}$.
\end{definition}

Under mild conditions, which are satisfied by $OA(\bullet,{\mathcal D},t)$, the Hilbert basis exists and is unique.

The Hilbert basis of $OA(\bullet,{\mathcal D},t)$ depends on the matrix $A$ in Eq. \eqref{latticecone2}, which in turn depends on the $t$-marginals of the Orthogonal Arrays. Thus, we have a different Hilbert basis for different ${\mathcal D}$ and $t$. From the computational point of view, there are specific algorithms to efficiently compute Hilbert bases. Such algorithms are available by means of specialized software. Currently, two choices are available: {\tt 4ti2}, see \cite {4ti2}, and the more recent package {\tt normaliz}, see \cite{normaliz}. For our purpose, the use of one or the other software is equivalent. In our examples, we have used {\tt 4ti2}, but the use of both these software is very easy. It is enough to input the matrix $A$ defining the polyhedral cone and the software returns the corresponding Hilbert basis.

Using the elements of the Hilbert basis, we can build all Orthogonal Arrays of any given sample size. As noticed in the Introduction, the limitation of our approach is due to the fact the computation of Hilbert bases is very intensive and the computational cost grows very fast when the full design becomes large. Therefore, the computations are limited to relatively small cases, which are to be considered as illustrative examples.

\section{Computations}

We consider OAs of strength $2$ for $5$ factors, each with $2$ levels, $OA(\bullet,2^5,t)$. The Hilbert Basis for this problem contains $26,142$ different elements which can be classified according to their size as reported in Table \ref{tab:hil52}.
\begin{table}
\caption{The elements of the Hilbert basis for $OA(\bullet,2^5,2)$ classified with respect to their sample size.}
\label{tab:hil52}       % Give a unique label
%
% Follow this input for your own table layout
%
\begin{tabular}{p{2cm}p{2cm}}
\hline\noalign{\smallskip}
size &	N \\
\noalign{\smallskip}\svhline\noalign{\smallskip}
8 &	60 \\
12 &	224 \\
16 &	162 \\
20 &	960 \\
24 &	7680 \\
28 &	8384 \\
32 &	5760 \\
36 &	2912 \\
\noalign{\smallskip}\hline\noalign{\smallskip}
\end{tabular}
\end{table}

First, we focus on the OAs with size equal to $16$. There are $162$ OAs of size $16$ in the Hilbert Basis. The remaining $16$-run OAs can be generated considering all possible unions of two OAs of size equal $8$.  We denote such OAs as $(8+8)$-run OAs. There are $60$ $8$-run OAs and therefore $60+\binom{60}{2}=1,830$ possibly different $(8+8)$-run OAs. We find $1,770$ different $(8+8)$-run OAs. The classification of the $162+1,770=1,932$ OAs of size $16$ according to the values of $A_3(\fraction)$ is reported in Table \ref{tab:16run}.

\begin{table}
\caption{Distribution of $OA(16,2^5,2)$ with respect to $A_3(\fraction)$.}
\label{tab:16run}       % Give a unique label
%
% Follow this input for your own table layout
%
\begin{tabular}{p{1,5cm}p{1,1cm}p{1,1cm}p{1,1cm}p{1,1cm}p{1,1cm}p{1,1cm}p{1,1cm}p{1,1cm}}
\hline\noalign{\smallskip}
 & \multicolumn{7}{c}{$A_3(\fraction)$} & \\
type &	$0$ &	$0.25$ &	$0.5$ &	$0.75$ &	$1$ &	$1.5$ &	$2$ & Total \\
\noalign{\smallskip}\svhline\noalign{\smallskip}
$16$-run &	$2$ &	$80$ &	$0$ &	$80$ &	$0$ &	$0$ &	$0$  & $162$ \\
$(8+8)$-run &	$10$ &	$0$ &	$240$ &	$0$ &	$1,220$ &	$240$ &	$60$ & $1,770$ \\
\noalign{\smallskip}\hline\noalign{\smallskip}
\end{tabular}
\end{table}

From Table \ref{tab:16run} we immediately see that there are $12$ designs with $A_3(\fraction)=0$. We can choose the best design(s) among these $12$ fractions. We find $2$ OAs of the $16$-run type for which $A_1(\fraction)=A_2(\fraction)=A_3(\fraction)=A_4(\fraction)=0$ and $A_5(\fraction)=1$.

As a second example, we consider OAs with $20$ runs. There are $960$ OAs of size $20$ in the Hilbert Basis. The remaining $20$-run OAs can be generated by considering all possible unions of two OAs, one of size $8$ and one of size $12$. We denote such OAs as $(8+12)$-run OAs. There are $60$ $8$-run OAs and $224$ $12$-run OAs and therefore $60 \cdot 224=13,440$ possibly different $(8+12)$-run OAs. We find $9,792$ different $(8+12)$-run OAs. The classification of the $960+9,792=10,752$ OAs of size $20$ according to the values of $A_3(\fraction)$ is reported in Table \ref{tab:20run}.

\begin{table}
\caption{Distribution of $OA(20,2^5,2)$ with respect to $A_3(\fraction)$.}
\label{tab:20run}       % Give a unique label
%
% Follow this input for your own table layout
%
\begin{tabular}{p{1,5cm}p{1,1cm}p{1,1cm}p{1,1cm}p{1,1cm}}
\hline\noalign{\smallskip}
 & \multicolumn{3}{c}{$A_3(\fraction)$} & \\
type &	$0.4$ & $0.72$ & $1.04$ & Total \\
\noalign{\smallskip}\svhline\noalign{\smallskip}
$20$-run &	$480$ &	$0$ &	$480$ & $960$ \\
$(8+12)$-run &	$1,632$ &	$4,800$ &	$3,360$ & $9,792$ \\
\noalign{\smallskip}\hline\noalign{\smallskip}
\end{tabular}
\end{table}

If we proceed as we did for OAs of size $16$, focusing on the $2,112$ OAs with $A_3=0.4$, we find $192$ GMA-optimal OAs. These are of the $(8+12)$-run type and their Word Length Pattern is $A_1(\fraction)=A_2(\fraction)=0,A_3(\fraction)=0.4,A_4(\fraction)=0.2$ and $A_5(\fraction)=0$.

\begin{acknowledgement}
Both authors are partially supported by a INdAM GNAMPA 2017 project.
\end{acknowledgement}

\bibliographystyle{spmpsci}
\bibliography{biblio}

\end{document}